\documentclass[a4paper,11pt]{article}
\usepackage{amsfonts}
\usepackage[T1]{fontenc}
\usepackage{microtype}
\usepackage{makeidx}
\usepackage{setspace}
\usepackage{authblk}
\usepackage{graphicx}
\usepackage{xcolor}
\usepackage[all]{xy}
\usepackage{amsmath}
\usepackage{amssymb}
\usepackage{booktabs}
\usepackage{braket}
\usepackage{array}
\usepackage{tabularx}
\usepackage{multirow}
\usepackage{indentfirst}
\usepackage{cite}
\usepackage{stmaryrd}
\usepackage{faktor}
\usepackage{titling}
\usepackage{tikz}
\usepackage{dsfont}
\usepackage{url}
\usepackage{hyperref}

\usepackage{tkz-graph}

\usetikzlibrary{matrix,arrows,decorations.pathmorphing}
    
    \renewcommand{\geq}{\geqslant}
\usepackage{amsthm}
\theoremstyle{plain}
\newtheorem{thm}{Theorem}[section]
\newtheorem{dfn}[thm]{Definition}

\newtheorem{prop}[thm]{Proposition}
\newtheorem{cor}[thm]{Corollary}

\theoremstyle{remark}
\newtheorem{oss}[thm]{Remark}

\theoremstyle{definition}
\newtheorem{ex}[thm]{Example}

\DeclareMathOperator{\N}{\mathbb{N}}
\DeclareMathOperator{\Z}{\mathbb{Z}}

\title{\bf{Linear extensions and shelling orders}}
\author{}
\author{Davide Bolognini\thanks{Dipartimento di Ingegneria Industriale e Scienze Matematiche, Università Politecnica delle Marche, Ancona, Italy.
\href{mailto:davide.bolognini.cast@gmail.com}{davide.bolognini.cast@gmail.com} } \ and Paolo Sentinelli\thanks{ Dipartimento di Matematica, Politecnico di Milano, Milan, Italy. \\ \href{mailto:paolosentinelli@gmail.com}{paolosentinelli@gmail.com}}}
\date{}
\begin{document}

\maketitle

\vspace{-4em}

\begin{abstract}
We prove that linear extensions of the Bruhat order of a matroid  are shelling orders and that the barycentric subdivision of a matroid is a Coxeter matroid, viewing barycentric subdivisions as subsets of a parabolic quotient of a symmetric group. A similar result holds for order ideals in minuscule quotients of symmetric groups and in their barycentric subdivisions.
Moreover, we apply promotion and evacuation for labeled graphs of Malvenuto and Reutenauer to dual graphs of simplicial complexes, introducing promotion and evacuation of shelling orders.
 \end{abstract}

\section{Introduction}
A pure simplicial complex is shellable if its facets admit a total order, called {\em shelling order}, such that each facet can be added gluing it along a subcomplex of codimension $1$. Shellability is one of the most studied combinatorial properties of simplicial complexes. Its pivotal role in combinatorics and commutative algebra is due to the fact that a shellable simplicial complex is also Cohen--Macaulay over every field. It is combinatorial because there exist both shellable and non-shellable triangulations of the same topological space (for non-shellable triangulations of spheres and balls see e.g. \cite{bruno}).

Examples of shellable simplicial complexes are vertex-decomposable ones (see e.g. \cite[Theorem 3.33]{jonsson}), boundaries of simplicial polytopes
\cite[Theorem 8.11]{ziegler}, order complexes of Bruhat intervals in parabolic quotients of Coxeter groups \cite{biorner-vacs} and of Bruhat intervals in their complements \cite{santanalla}, order complexes of face posets of electrical networks \cite{reti elettriche}, among others.

A subclass of vertex-decomposable simplicial complexes are independence complexes of matroids (see for instance \cite[Theorem 13.1]{jonsson}), for which a shelling order is given by the lexicographic order of the facets. The set of facets of a pure $k$-dimensional simplicial complex on $n$ vertices can be identified with a subset of the set $S_n^{(k)}$ of Grassmannian permutations, which can be endowed with the Bruhat order. Therefore, if $X \subseteq S_n^{(k)}$ is the set of bases of a matroid, we view $X$ as a poset with the induced order, so we can speak about {\em the Bruhat order of the matroid $X$} (also called \emph{Gale order}). Inspired by the fact that the lexicographic order is a linear extension of $X$, we state in Theorem  \ref{estensioni} that all the linear extensions of $X$ are shelling orders. Actually we prove this result for the larger class of simplicial complexes with the quasi-exchange property, introduced in \cite{samper}; this class includes also order ideals of $S_n^{(k)}$, see Corollary \ref{corollario samper}. Since there are shellable simplicial complexes for which no linear extension is a shelling order (we checked it for the so-called Hachimori's complex, see e.g. \cite[Example 4.5]{balagnana} for a list of facets), this result provides a structural connection between shellings orders of matroids and linear extensions of their Bruhat orders. Nevertheless, as expected, there are shelling orders of matroids which are not linear extensions, also up to relabeling (see Example \ref{etichette}).

Coxeter matroids generalize, via the maximality property, standard matroids. By extending maximality property to different contexts, in \cite{BoloSenti} we generalized flag matroids to $P$-flag matroids and in \cite{BoloSenti2} matroids to $\chi$-matroids, where $P$ is any finite poset and $\chi$ a one-dimensional character of a finite group. In this paper, we provide another connection between matroids and Coxeter matroids involving barycentric subdivisions of simplicial complexes (Theorem \ref{baricentro matroidi}).

The interpretation of the facets of a pure simplicial complex $X$ as elements of $S_n^{(k)}$ allows us to view the facets of the barycentric subdivision $\mathcal{B}(X)$ of $X$ as permutations in $S_n$ obtained by acting with $S_k$ on the elements of $X$. In Definition \ref{def shell} we introduce a notion of {\em flag shellability} for subsets of the barycentric subdivision $\mathcal{B}(S_n^{(k)})$. Flag shellability of $\mathcal{B}(X)$ coincides with shellability of the order complex of the face poset of $X$. In Theorem \ref{teorema ideali 2} we prove that the linear extensions of order ideals of $\mathcal{B}(S_n^{(k)})$ are flag shelling orders.

Although shellable simplicial complexes are extremely nice from a combinatorial point of view, also in this realm weird things may happen: for instance there exist shellable simplicial complexes such that every possible shelling order is forced to end with a specific facet (see \cite[Appendix F]{simon}). For this reason, it is crucial to know if and how a shelling order can be rearranged to have a new shelling order. The promotion function was defined on linear extensions of posets (see \cite{stanley promotion} for a survey and \cite{defant}, \cite{defant2} for recent results and new developments): given a linear extension of a poset, its promotion is a new linear extension, obtained rearranging the first. By taking advantage of the generalization given in \cite{malvenuto} and by considering the so-called dual graph of a pure simplicial complex (or, equivalently, its undirected Bruhat graph, see Remark \ref{oss lanini}), in Section \ref{promo} we introduce promotion and evacaution of shelling orders; see Theorems \ref{teorema shelling} and \ref{teorema evacuazione}. The core of the proof is given by a structural property of shelling orders, which is interesting by itself, see Proposition \ref{lemma che sanno tutti}.
For simplicial complexes for which linear extensions are shelling orders, it is natural to ask if the promotion of shelling orders agrees with promotion of linear extensions: under a suitable assumption, in Proposition \ref{proposizione hasse} we prove that this is the case; this assumption is fulfilled by interesting classes of simplicial complexes, see Corollary \ref{corollario promozioni uguali}.

\section{Notation and preliminaries} \label{preliminari}
In this section we fix notation and recall some definitions useful for the rest of the paper. We refer to \cite{Stanley} for posets, to \cite{BB} for Coxeter groups, and to \cite{coxeter matroids} for matroids and Coxeter matroids.

Let $\Z$ be the ring of integers and $\mathbb{N}$ the set of positive integers. For $n\in \mathbb{N}$, we use the notation $[n]:=\left\{1,2,\ldots,n\right\}$. For a finite set $X$, we denote by $|X|$ its cardinality and by $\mathcal{P}(X)$ its power set, which is an abelian group with the operation given by symmetric difference $A+B:=(A\setminus B) \cup (B\setminus A)$, for all $A,B\subseteq X$.
We denote by $X^n$ the $n$-th power under Cartesian product, by $x_i$ the projection of $x\in X^n$ on the $i$-th factor, and we set $N(x):=n$.
For $k\in \N$, $k \leqslant |X|$, we define the $k$-th \emph{configuration space} of $X$ by
$$\mathrm{Conf}_k(X):=\left\{x\in X^k: x_i=x_j \Rightarrow i=j, \, \, \forall\, i,j\in [k]\right\},$$
and, if $<$ is a total order on $X$, the  $k$-th \emph{unordered configuration space} of $X$ by
$$X^k_<:=\left\{x\in X^k: i<j \Rightarrow x_i < x_j, \, \forall \,i,j\in [k]\right\}.$$ We also set $$\mathrm{Conf}(X):=\bigcup\limits_{k= 1}^{|X|}\mathrm{Conf}_k(X).$$
Sometimes we write $a_1\ldots a_k \in \mathrm{Conf}_k(X)$ instead of $(a_1,\ldots,a_k)\in \mathrm{Conf}_k(X)$.

We consider the symmetric group $S_n$ of order $n!$ as a Coxeter group, with generators given by simple transpositions $S:=\{s_1,\ldots,s_{n-1}\}$, where, in one-line notation, $s_i:=12\ldots (i+1)i\ldots n$, for all $i\in [n-1]$.
The \emph{right descent set} of a permutation $w\in S_n$ is defined by $$D_R(w):=\{i \in [n-1]: w(i)>w(i+1)\}.$$
For $J\subseteq [n-1]$ define $$S_n^J:=\{w\in S_n : i\in J\, \Rightarrow\, w(i)<w(i+1)\}.$$
There is a function $P^J : S_n \rightarrow S_n^J$ defined by mapping a permutation $w$ to an increasing rearrangement according to $J$,
as described in \cite[Section~2.4]{BB}.
The following example should make clear how to obtain the permutation $P^J(w)$.
\begin{ex}
  Let $n=7$, $J=\left\{1,2,4,6\right\}$ and $w=4317625$. Therefore we have to rearrange increasingly the blocks $431$, $76$ and $25$. It follows that $P^J(w)=1346725$.
\end{ex}
If $k\in [n-1]$, the {Bruhat order}\footnote{The Bruhat order on a minuscule quotient of $S_n$ is also known as \emph{Gale order}.} $\leqslant$ on the \emph{minuscule quotient} $S_n^{(k)}:=S_n^{[n-1]\setminus \{k\}}$ is defined by setting $u \leqslant v$ if and only if $u(i)\leqslant v(i)$, for all $1\leqslant i \leqslant k$ (see \cite[Proposition 2.4.8]{BB}). We let $S^{(n)}_n:=S_n^{[n-1]}=\{e\}$. The elements of $S_n^{(k)}$ are called \emph{Grassmannian permutations}.
The Bruhat order on $S_n$ can be defined by setting
\begin{equation} \label{bruhat}
    u \leqslant v \, \Leftrightarrow \, P^{[n-1]\setminus \{k\}}(u) \leqslant P^{[n-1]\setminus \{k\}}(v), \, \, \mbox{for all $k\in [n-1]$,}
\end{equation} for all $u,v\in S_n$  (see \cite[Theorem 2.6.1]{BB}).
On the subset $S^J_n$ of $S_n$ we consider the induced order, and this leads to the definition of Coxeter matroid via the {\em maximality property}.
\begin{dfn}
A subset $X\subseteq S_n^J$ is a \emph{Coxeter matroid}
if the induced subposet $\{P^J(wx): x\in X\}\subseteq S_n^J$ has a unique maximum (equivalently, has a unique minimum) for all $w\in S_n$.
\end{dfn}
For example, if $J=[n-1]\setminus \{k\}$, then a Coxeter matroid is a matroid of rank $k$ on the set $[n]$ (see \cite[Section 1.3]{coxeter matroids}).
For $J=\varnothing$ a Coxeter matroid is a flag matroid (see \cite[Section 1.7]{coxeter matroids}). In Section \ref{sezione baricentrica} we prove that some Coxeter matroids for $J=[n-1]\setminus [k]$ can be realized as barycentric subdivisions of independence complexes of matroids.

The  $k$-th configuration space of $[n]$ can be identified with the quotient $S^{[n-1]\setminus [k]}_n$, i.e., as sets,
$$\mathrm{Conf}_k([n]) \simeq  S^{[n-1]\setminus [k]}_n.$$
Then it makes sense to consider on $\mathrm{Conf}_k([n])$ the Bruhat order.

On $[n]^k_< \subseteq \mathrm{Conf}_k([n])$ we consider the induced order; this poset is isomorphic to $S_n^{(k)}$ with the Bruhat order. Then, as posets,
$$[n]^k_< \simeq S^{(k)}_n.$$
For example, in $[8]^4_<$ we have  $3456 \leqslant 4568$ and  $2568 \nleqslant 3478$. We also repeatedly use the identification $$[n]^k_< \simeq \{X\subseteq [n]: |X|=k\},$$ where $[n]^0_<:=\{\varnothing\}$. Then, identifying $U:=\bigcup\limits_{k=0}^n[n]^k_<$ with $\mathcal{P}(X)$,  it makes sense to write $x \cap y$, $x \cup y$ and the symmetric difference $x+y$, for all $x,y \in U$.

Since $\mathrm{Conf}([n]) \simeq \bigcup_{i=1}^n S_n^{[n-1]\setminus [i]}$, for $k\in [n]$ we have a function $P^{(k)} : \mathrm{Conf}([n]) \rightarrow [n]^k_<$ obtained by
gluing the functions $P^{[n-1]\setminus \{k\}}: S_n^{[n-1]\setminus [i]} \rightarrow S_n^{(k)}$ for all $i\in [n]$. Notice that $x\leqslant y$
in the Bruhat order of $\mathrm{Conf}_k([n])$ if and only if
$P^{(i)}(x)\leqslant P^{(i)}(y)$ in $[n]^i_<$
for all $i\in [k]$. For example,
$3125 \leqslant 4251$ in $\mathrm{Conf}_4([5])$. On the other hand, $3152 \nleqslant 4215$ in $\mathrm{Conf}_4([5])$, since
$P^{(3)}(3152)=135 \nleqslant 124 = P^{(3)}(4215)$.

By our identifications, a matroid of rank $k$ on the set $[n]$
is a subset of $[n]^k_<$, and a Coxeter matroid in the quotient
$S^{[n-1]\setminus [k]}_n$ is a subset of $\mathrm{Conf}_k([n])$.
We have defined a matroid by the maximality property, which is equivalent to the exchange property (see \cite[Theorem 1.3.1]{coxeter matroids}):

\begin{dfn} [Exchange property]
A set $X\subseteq [n]^k_<$ is a matroid if and only if for all $A,B \in X$ and $a \in A\setminus B$, there exists $b\in B\setminus A$ such that $A+\{a,b\} \in X$.
\end{dfn}
Let $M\subseteq [n]^k_<$ be a matroid and $i\in [n-1]$. Then $\{P^{(i)}(x): x\in M\}$ is a matroid, called the \emph{shift} of $M$ to $[n]^i_<$ (see \cite[Section 6.12.1]{coxeter matroids}). The \emph{underlying flag matroid} of $M$ is
the union of cosets $\biguplus_{x\in M}x(S_n)_{S\setminus \{s_k\}}$, where
$(S_n)_{S\setminus \{s_k\}}$ is the parabolic subgroup of $S_n$ generated by $S\setminus \{s_k\}$ (see \cite[Section 6.6]{coxeter matroids}).
\begin{ex}
    Let $M:=\{13,34\} \subseteq [4]^2_<$. Then the shift of the matroid $M$ to
    $[4]^3_<$ is the matroid $\{123,134\}$. The underlying flag matroid of $M$ is
    $\{1324, 3124, 1342, 3142, 3412, 4312, 3421, 4321\} \subseteq \mathrm{Conf}_4([4]) \simeq S_4$.
\end{ex} In general, for $I,J \subseteq [n-1]$, the shift of a Coxeter matroid $M \subseteq S_n^J$ to $S_n^I$
is the Coxeter matroid $\{P^I(x): x\in M\}$.

\section{Linear extensions of pure simplicial complexes} \label{sezione simpliciale}

Let $k,n\in \N$ be such that $k\leqslant n$. We identify a pure simplicial complex $X$ of dimension $k-1$ on $n$ vertices with the set of its facets. Since any facet of $X$ corresponds to a subset of $[n]$ of cardinality $k$, we can view the $X$ as a subset of $[n]^k_<$.
On the other hand, any subset of $[n]^k_<$ provides a pure simplicial complex of dimension $k-1$ on $n$ vertices. Therefore, matroids of rank $k$ on the set $[n]$ are pure simplicial complexes of dimension $k-1$.

\begin{dfn}
 An element $L\in \mathrm{Conf}([n]^k_<)$ is a \emph{linear extension} if
$L_i < L_j$ in the Bruhat order implies $i<j$, for all $i,j \in [N(L)]$.
\end{dfn}
For example, $(357,268,468) \in \mathrm{Conf}([8]^3_<)$ is a linear extension. We provide now the definition of shelling order.
\begin{dfn}
An element $C \in \mathrm{Conf}\left([n]^k_<\right)$ is a \emph{shelling order}
if $i<j$ implies that there exists
$z<j$ such that $|C_z \cap C_j|=|C_j|-1$ and
$C_i \cap C_j \subseteq C_z \cap C_j$, for all $i,j\in [N(C)]$.
\end{dfn} A pure simplicial complex $X\subseteq [n]^k_<$ is said to be \emph{shellable}
if there exists a shelling order $C\in \mathrm{Conf}\left([n]^k_<\right)$ such that $X=\{C_1,\ldots,C_{N(C)}\}$.
It is well known that, if $X\subseteq [n]^k_<$ is a matroid, then the lexicographic order on $X$ is
a shelling order (see \cite[Theorems 7.3.3 and 7.3.4]{bjorner}) and a linear extension of the Bruhat order of $X$.

In the following theorem we prove that for a wide class of simplicial complexes, including matroids and order ideals in $[n]^k_<$, actually any linear extension of the Bruhat order provides a shelling order.
This class is defined by the following property (see \cite[Definition 4.1]{samper}).
\begin{dfn}
A subset $X\subseteq [n]^k_<$ has the \emph{quasi--exchange} property if, given $x,y\in X$, then $i\in x\setminus y$ and $i>\max(y\setminus x)$ imply that there exists $j\in y\setminus x$ such that $x+\{i,j\} \in X$.
\end{dfn}

Notice that if $i\in x$, $i>\max(y\setminus x)$ and $j\in y\setminus x$, then $x+\{i,j\}<x$ in the Bruhat order, for all $x,y\in [n]^k_<$.

\begin{thm} \label{estensioni}
If $X \subseteq [n]^k_<$ has the quasi--exchange property, then any linear extension of $X$ is a shelling order.
\end{thm}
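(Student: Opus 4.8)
The plan is to verify the shelling condition directly from the definition, using the quasi--exchange property to produce the required codimension-one intermediate facet. Fix a linear extension $L$ of $X$ and take indices $i<j$ in $[N(L)]$; write $x:=L_j$ and $y:=L_i$. We must find $z<j$ with $|L_z \cap x| = k-1$ and $y\cap x \subseteq L_z \cap x$. The intuition is that we should ``repair'' $y$ towards $x$ one element at a time, always staying inside $X$ and always strictly decreasing in the Bruhat order (so that, $L$ being a linear extension, the new facet has a smaller index than $j$).

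First I would reduce to the case where $y$ and $x$ differ in exactly two elements, i.e.\ $|x\setminus y| = |y\setminus x| = 1$; this is the facet we will take as $z$. To get there, set $m:=\max(y\setminus x)$. If $|y\setminus x|\ge 2$, then $y\setminus x$ also contains some element $i_0 \ne m$, and I claim we can move $y$ closer to $x$: apply quasi--exchange with the roles $x\mapsto y$, $y\mapsto x$, and the element $m\in y\setminus x$ with $m>\max(x\setminus y)$ (note $\max(y\setminus x)=m$ plays the symmetric role since it is the largest element of the symmetric difference lying in $y$), obtaining $j_0\in x\setminus y$ with $y':=y+\{m,j_0\}\in X$. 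By the remark just before the theorem, $y' < y$ in the Bruhat order, hence $y'$ precedes $y$, hence precedes $x=L_j$, in the linear extension $L$. Moreover $y'\cap x \supseteq y\cap x$ and $|y'\setminus x| < |y\setminus x|$. Iterating, after finitely many steps we reach a facet $z\in X$ with $z\cap x \supseteq y\cap x$, with $z<x$ in Bruhat order (so $z=L_{z'}$ for some $z'<j$), and with $|z\setminus x|=1$, i.e.\ $|z\cap x| = k-1$. This $z$ witnesses the shelling condition for the pair $(i,j)$.

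The point requiring care — and the main obstacle — is making sure that at each step the hypotheses of the quasi--exchange property are genuinely met, in particular that the element we feed in is \emph{strictly larger} than the max of the other side of the symmetric difference, and that the symmetric difference indeed shrinks rather than merely changes. Concretely, after replacing $y$ by $y'=y+\{m,j_0\}$ one has $y'\setminus x = (y\setminus x)\setminus\{m\}$ and $x\setminus y' = (x\setminus y)\setminus\{j_0\}$, so both sides of the symmetric difference drop by one and the intersection with $x$ strictly grows; this guarantees termination. One must also observe that throughout the iteration $m$ (redefined as the current $\max(y\setminus x)$) remains a legitimate input: it lies in the current set and, being the largest element of the current symmetric difference on that side, exceeds $\max$ of the other side exactly when that side is nonempty — and if the other side $x\setminus y$ is empty while $y\setminus x$ is not, that contradicts $|x|=|y|=k$, so this never happens until both are empty. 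Finally, the base case $|y\setminus x|\le 1$: if $y=x$ there is nothing to prove (take $z=y$, though then $i=j$ is excluded anyway), and if $|y\setminus x|=1$ then $y$ itself already has $|y\cap x|=k-1$ and $y\cap x\subseteq y\cap x$, so $z:=y=L_i$ works with $i<j$. This completes the argument.
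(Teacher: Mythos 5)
Your argument has a genuine gap at exactly the step you flag as ``the point requiring care.'' The quasi--exchange property is directional: to remove an element $m$ from the current set $y$ you need $m\in y\setminus x$ \emph{and} $m>\max(x\setminus y)$. You take $m=\max(y\setminus x)$ and justify the hypothesis by asserting that this ``exceeds $\max$ of the other side exactly when that side is nonempty.'' That equivalence is false: $\max(y\setminus x)>\max(x\setminus y)$ holds precisely when the largest element of the symmetric difference $x+y$ lies in $y$, and nothing forces this. Concretely, take $k=2$, $y=23$, $x=14$ in $[4]^2_<$: these are Bruhat--incomparable, so the linear extension $(12,13,23,14,24,34)$ of the matroid $[4]^2_<$ places $y$ before $x$; here $\max(y\setminus x)=3<4=\max(x\setminus y)$ with both sides nonempty, and your first step cannot be performed. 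The same obstruction can recur at any later stage of the iteration even when the first step goes through, since after an exchange the maximum of the new symmetric difference may migrate to the $x$ side.

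This dichotomy --- whether the top of the symmetric difference sits in $L_j$ or in $L_i$ --- is precisely the case split organizing the paper's proof. When it sits in $x=L_j$, a single application of quasi--exchange to $x$ itself (removing $\max(x\setminus y)$, which lies outside $y$) immediately yields the codimension-one witness $z<x$; this is the paper's easy Case~1 and is the case your argument omits. When it sits in $y=L_i$, your move is legitimate, but the paper does not simply iterate it: it runs an induction over the earlier facets, anchored at $L_1$ (for which Case~1 must apply, since otherwise something smaller than $L_1$ would lie in $X$), in order to transfer an exchange performed \emph{on $L_{r+1}$} down the linear extension. Your strategy is repairable along these lines: at each stage test where the maximum of the current symmetric difference lies; if in $x$, terminate at once by the Case~1 move (the witness $x+\{\max(x\setminus y_t),j_0\}$ contains $y_t\cap x\supseteq y\cap x$), and if in $y_t$, your step applies, strictly shrinks $|y_t\setminus x|$, preserves $y\cap x\subseteq y_t\cap x$, and keeps $y_t\leqslant L_i$ in the Bruhat order so that all intermediate facets occur before position $j$. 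But as written, with the missing case dismissed by an incorrect equivalence, the proof does not go through.
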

\begin{proof} If $k=n$ the statement is trivial. So we may assume $k<n$. Let $h:=|X|$ and $L=(L_1,\ldots,L_h)$ be a linear extension of $X$. If $h=1$ we have nothing to show. So let $h>1$.
Assume that $(L_1,\ldots,L_r)$ is a shelling order for $r<h$ and
consider the linear extension $(L_1,\ldots,L_r,L_{r+1})$.  Let $i\in [r]$.
Since $L$ is a linear extension we have that $L_i \ngeqslant L_{r+1}$.
We are going to show that there exists $L_z$ with $z \in [r]$ such that $|L_z \cap L_{r+1}|=|L_{r+1}|-1$ and $L_i\cap L_{r+1} \subseteq L_z\cap L_{r+1}$.
Let $v:=\max\{j\in [k]:L_{r+1}(j)\neq L_i(j)\}$. We have two cases:
\begin{enumerate}
\item $L_{r+1}(v)>L_i(v)$: in this case $L_{r+1}(v)>L_i(v)=\max (L_i \setminus L_{r+1})$ and $L_{r+1}(v)\not \in L_i$. By the quasi--exchange property, there exists $y\in L_i\setminus L_{r+1}$ such that $Y:=L_{r+1}+ \{L_{r+1}(v),y\} \in X$. Hence $Y<L_{r+1}$ in the Bruhat order, i.e. there exists $z\in [r]$ such that $Y=L_z$, since
    $L$ is a linear extension of the Bruhat order of $X$. Therefore $L_z$ has the required properties.

\item $L_{r+1}(v)<L_i(v)$: in this case $L_i(v)>L_{r+1}(v)=\max (L_{r+1}\setminus L_i)$ and $L_i(v)\not \in L_{r+1}$. By the quasi--exchange property, there exists $y\in L_{r+1}\setminus L_i$ such that $Y:=L_i+\{y,L_i(v)\} \in X$, and $Y<L_i$ in the Bruhat order. Then $i>1$ and there exists $j\in [i-1]$ such that $Y=L_j$, since
$L$ is a linear extension of $X$.
Moreover, if $u:=\max\{j\in [k]:L_{r+1}(j)\neq L_1(j)\}$,
then $L_1(u)<L_{r+1}(u)$. In fact, if $L_1(u)>L_{r+1}(u)$, then $L_{r+1}(u)=\max(L_{r+1}\setminus L_1)$ and there exists $m\in L_{r+1}\setminus L_1$ such that $M:=L_1+\{m,L_1(u)\}\in X$ with $M<L_1$ in the Bruhat order, a contradiction. So $L_{r+1}(u)>L_1(u)=\max (L_1\setminus L_{r+1})$.
By the previous case, there exists $w\in L_{r+1}\setminus L_1$ and $w'\in L_1 \setminus L_{r+1}$ such that $w'<w$ and $L_{r+1}+\{w,w'\} \in X$. Assume that, for all $j<i$, there exist $w\in L_{r+1}\setminus L_j$ and $w'\in L_j \setminus L_{r+1}$ such that $w'<w$ and $L_{r+1}+\{w,w'\} \in X$.
By our inductive assumption, there exists $w\in L_{r+1}\setminus Y$ and $w'\in Y\setminus L_{r+1}$ such that $w'<w$ and $W:=L_{r+1}+\{w,w'\} \in X$; therefore $W<L_{r+1}$ in the Bruhat order. This implies that there exists $z\in [r]$ such that  $W=L_z$.
    Notice that $w\not \in L_i$; in fact, since $Y=L_i+\{L_i(v),y\}$, if $w\in L_i$ we have that $w=L_i(v) \not \in L_{r+1}$, a contradiction.
   Then $|L_z \cap L_{r+1}|=|L_{r+1}|-1$ and $L_i\cap L_{r+1} \subseteq L_z\cap L_{r+1}$.
\end{enumerate}
\end{proof}

\begin{cor} \label{corollario samper}
    Let $X\subseteq [n]^k_<$ be an order ideal or a matroid. Then any linear extension of $X$ is a shelling order.
\end{cor}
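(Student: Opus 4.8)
The plan is to deduce Corollary \ref{corollario samper} directly from Theorem \ref{estensioni}: since that theorem shows every linear extension of a set with the quasi--exchange property is a shelling order, it suffices to verify that both order ideals and matroids in $[n]^k_<$ have the quasi--exchange property. So the whole proof reduces to two short property checks.

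First I would dispatch the matroid case, which I expect to be purely formal. If $X\subseteq [n]^k_<$ is a matroid, the exchange property recalled in the excerpt provides, for every $A,B\in X$ and every $a\in A\setminus B$, an element $b\in B\setminus A$ with $A+\{a,b\}\in X$. The quasi--exchange property asks for exactly such a $b$, but only under the additional hypothesis that $a>\max(B\setminus A)$; as this is merely a restriction of the case already handled by the exchange property, matroids satisfy quasi--exchange with nothing more to prove.

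For the order--ideal case I would invoke the remark stated just before Theorem \ref{estensioni}. Let $X$ be an order ideal in $[n]^k_<$, and take $x,y\in X$ with $i\in x\setminus y$ and $i>\max(y\setminus x)$. Since $i\in x\setminus y$ we have $x\neq y$, and because $|x|=|y|=k$ this forces $y\setminus x\neq\varnothing$; choose any $j\in y\setminus x$. By the cited remark, $x+\{i,j\}<x$ in the Bruhat order, and since $X$ is an order ideal containing $x$ we get $x+\{i,j\}\in X$, which is precisely the conclusion required by quasi--exchange. I do not foresee a genuine obstacle; the only point deserving a moment's attention is guaranteeing that $y\setminus x$ is nonempty so that a valid $j$ exists, which is immediate from $|x|=|y|$ and $x\neq y$.
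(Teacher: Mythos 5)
Your proposal is correct, and the matroid half coincides with the paper's (the paper simply says ``clearly any matroid has the quasi-exchange property,'' which is your observation that quasi-exchange is a restricted instance of the exchange property). The difference is in the order-ideal half: the paper cites \cite[Theorem 4.11]{samper} as a black box, whereas you verify quasi-exchange directly from the remark preceding Theorem \ref{estensioni} --- given $x,y\in X$ with $i\in x\setminus y$ and $i>\max(y\setminus x)$, the set $y\setminus x$ is nonempty (since $|x|=|y|$ and $x\neq y$), \emph{any} $j\in y\setminus x$ yields $x+\{i,j\}<x$ in the Bruhat order, and the order-ideal hypothesis then puts $x+\{i,j\}$ in $X$. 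This self-contained check is sound and arguably preferable here, since it makes the corollary independent of Samper's result for the order-ideal case; the citation, on the other hand, situates order ideals within Samper's broader framework of quasi-matroidal classes, which is the context the paper wants to emphasize. Either way the conclusion follows from Theorem \ref{estensioni} exactly as you say.
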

\begin{proof} Clearly any matroid has the quasi-exchange property. Moreover
    by \cite[Theorem 4.11]{samper} any order ideal of $[n]^k_<$ has the quasi-exchange property. So the result follows by Theorem \ref{estensioni}.
\end{proof}

\begin{oss}
Recall that there exist matroids which are not order ideals, for example the non-representable ones. Analogously, by the maximality property of matroids, non-principal order ideals are not matroids.
\end{oss}

\begin{oss}
   In a private communication, J. A. Samper pointed out to us that the statement of Corollary \ref{corollario samper} for matroids can be deduced by combining \cite[Theorem 1.3]{ardila} and \cite[Theorem 4.14]{samper}.
\end{oss}
We formalize now a notion of isomorphism between shelling orders.
A permutation $\sigma \in S_n$ induces a function
$$\sigma : \mathrm{Conf}\left([n]^k_<\right) \rightarrow \mathrm{Conf}\left([n]^k_<\right),$$
defined by $\sigma(X)=\left((P^{(k)}\circ\sigma)(X_1),\ldots,(P^{(k)}\circ \sigma)(X_k)\right)$, for all $X\in \mathrm{Conf}\left([n]^k_<\right)$, where
$\sigma : [n]^k_< \rightarrow \mathrm{Conf}_k([n])$ is the function defined by $\sigma(x)=(\sigma(x_1),\ldots,\sigma(x_k))$, for all $x\in [n]^k_<$.
\begin{dfn}
Two elements $A, B\in \mathrm{Conf}\left([n]^k_<\right)$
are \emph{isomorphic} if there exists $\sigma \in S_n$ such that $\sigma(A)=B$.
\end{dfn}

Essentially, two shelling orders are isomorphic if they are the same up to relabeling.
For example, all shelling orders in $\mathrm{Conf}_2\left([n]^k_<\right)$ are isomorphic; on the other hand, the shelling orders
$A_1:=(123,124,125)$, $A_2:=(123,124,135)$
and $A_3:=(123,124,145)$ are pairwise not isomorphic in $\mathrm{Conf}_3\left([5]^3_<\right)$.

In the following example we observe that there exist linear extensions of a matroid which are not isomorphic to a lexicographic order.

\begin{ex}
The Bruhat interval $[12,24]=\{12,13,14,23,24\} \subseteq [4]^2_<$ is a matroid and it has two linear extensions: the lexicographic order and $L:=(12,13,23,14,24)$. Since the linear extension $L$ is a shelling order, $\sigma(L)$ is a shelling order; it is different from the lexicographic order, for all $\sigma \in S_4$.
\end{ex}

In the following example we show that there exist shelling orders of a matroid not isomorphic to any linear extension.

\begin{ex} \label{etichette}
The tuple $C:=(12,23,13,14,24)$ is a shelling order for the matroid $[12,24] \subseteq [4]^2_<$ and $\sigma(C)$ is not a linear extension, for all $\sigma \in S_4$.
\end{ex}

\section{Barycentric subdivisions and flag shellability} \label{sezione baricentrica}

The barycentric subdivision of a simplicial complex is the order complex of its face poset; see for instance \cite{BW}. Let $X\subseteq [n]^k_<$ and
$F_X$ be the face poset of $X$;
we denote by $\mathcal{MC}(F_X)$ the set of maximal chains of $F_X$. There exists an injective function $B: \mathcal{MC}(F_X) \rightarrow \mathrm{Conf}_k([n])$ defined as follows. Let $c\in \mathcal{MC}(F_X)$;
then $c$ corresponds to a flag  $\{x_1\} \subset \{x_1,x_2\} \subset \ldots \subset \{x_1,\ldots,x_k\}$ of subsets of the facet $\{x_1,\ldots,x_k\}_< \in X$, where $\left\{x_1,\ldots,x_k\right\}_< \in [n]^k_<$ is the tuple obtained by ordering $x_1,\ldots,x_k$. Hence we set $$B(c):=(x_1,\ldots, x_k) \in \mathrm{Conf}_k([n]).$$

Therefore maximal chains in $F_X$ with maximum $x=(x_1,\ldots,x_k) \in X \subseteq [n]^k_<$ are in bijection with permutations of the set $\{x_1,\ldots,x_k\}$. We introduce a new definition of \emph{barycentric subdivision} $\mathcal{B}(X)$ of $X$ as a union of cosets of the symmetric group $S_k$, viewing elements of $[n]^k_<$ as permutations:  $$\mathcal{B}(X):=\biguplus\limits_{x\in X}\{x\sigma : \sigma \in S_k\} \subseteq  \mathrm{Conf}_k([n]).$$
In particular, the barycentric subdivision of $[n]^k_<$ is
$\mathrm{Conf}_k([n])$.

The standard way to subdivide barycentrically a matroid in $[n]^k_<$ provides a simplicial complex in a suitable $[m]^k_<$, which is almost never a matroid.
The following theorem shows that barycentric subdivisions of matroids, in our interpretation, are Coxeter matroids.
\begin{thm} \label{baricentro matroidi}
A simplicial complex $X\subseteq [n]^k_<$ is a matroid if and only if the barycentric subdivision $\mathcal{B}(X)\subseteq \mathrm{Conf}_k([n])$ is a Coxeter matroid.
\end{thm}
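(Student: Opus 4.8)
The strategy is to exhibit $\mathcal{B}(X)$ and $X$ as shifts of a common companion object living in $S_n$, and then to combine the fact recalled in Section~\ref{preliminari}, that a shift of a Coxeter matroid is again a Coxeter matroid, with two identifications: a Coxeter matroid in $S_n^{[n-1]\setminus\{k\}}=S_n^{(k)}$ is precisely a matroid of rank $k$ on $[n]$ by \cite[Section~1.3]{coxeter matroids}, and the underlying flag matroid of a matroid is a flag matroid, hence a Coxeter matroid in $S_n=S_n^{\varnothing}$, by \cite[Sections~1.7 and~6.6]{coxeter matroids}.

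The core of the argument is an explicit description of the two rearrangement maps involved. Set $J_0:=[n-1]\setminus[k]$, so that $S_n^{J_0}\simeq\mathrm{Conf}_k([n])$, and $J_1:=[n-1]\setminus\{k\}$, so that $S_n^{J_1}=S_n^{(k)}$ and $(S_n)_{S\setminus\{s_k\}}$ is the parabolic subgroup generated by $\{s_i:i\in J_1\}$. For $w\in S_n$, the permutation $P^{J_0}(w)$ leaves $w(1),\ldots,w(k)$ fixed and sorts $w(k+1),\ldots,w(n)$ increasingly, whereas $P^{J_1}(w)$ sorts $w(1),\ldots,w(k)$ and $w(k+1),\ldots,w(n)$ separately. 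It follows that, for $x\in[n]^k_<$ regarded as a Grassmannian permutation, the right coset $x(S_n)_{S\setminus\{s_k\}}$ is exactly the set of $w\in S_n$ with $\{w(1),\ldots,w(k)\}=x$, and that $P^{J_0}$ carries this coset onto the set $\{x\sigma:\sigma\in S_k\}$ of all orderings of $x$. Setting $F:=\biguplus_{x\in X}x(S_n)_{S\setminus\{s_k\}}\subseteq S_n$ and taking the disjoint union over $x\in X$ yields
$$\mathcal{B}(X)=\{P^{J_0}(w):w\in F\},$$
that is, $\mathcal{B}(X)$ is the shift of $F$ to $S_n^{J_0}$. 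Dually, $P^{J_1}$ sends every ordering $x\sigma$ of $x$ back to the increasing arrangement $x$, so $\{P^{J_1}(v):v\in\mathcal{B}(X)\}=X$, that is, $X$ is the shift of $\mathcal{B}(X)\subseteq S_n^{J_0}$ to $S_n^{J_1}=S_n^{(k)}$. Both identities hold for an arbitrary subset $X\subseteq[n]^k_<$ and reduce to bookkeeping of which blocks of positions are sorted by $P^{J_0}$ and $P^{J_1}$.

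The two implications then drop out. If $\mathcal{B}(X)$ is a Coxeter matroid in $S_n^{J_0}$, its shift $X$ to $S_n^{(k)}$ is a Coxeter matroid, hence a matroid of rank $k$ on $[n]$. Conversely, if $X$ is a matroid then $F$ is its underlying flag matroid, hence a Coxeter matroid in $S_n=S_n^{\varnothing}$, so its shift $\mathcal{B}(X)$ to $S_n^{J_0}$ is a Coxeter matroid.

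The only delicate point — and the reason the forward implication is routed through $F$ rather than through a direct shift of $X$ — is that a shift never enlarges a subset, while $\mathcal{B}(X)$ adjoins all reorderings of every basis of $X$; so $\mathcal{B}(X)$ cannot be obtained as a shift of $X\subseteq S_n^{(k)}$, whereas the companion $F\subseteq S_n$ already records all reorderings and projecting it to the quotient $\mathrm{Conf}_k([n])$ reproduces $\mathcal{B}(X)$ exactly. Establishing that last equality — concretely, that $x(S_n)_{S\setminus\{s_k\}}$ is the full fibre of the map $w\mapsto\{w(1),\ldots,w(k)\}$ over $x$, and that $P^{J_0}$ realizes the corresponding projection — is the main obstacle; everything else is formal.
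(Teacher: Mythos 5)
Your proposal is correct and follows essentially the same route as the paper: one direction recovers $X$ as the shift of $\mathcal{B}(X)$ to $S_n^{(k)}$, and the other realizes $\mathcal{B}(X)$ as the shift to $\mathrm{Conf}_k([n])$ of the underlying flag matroid $\biguplus_{x\in X}x(S_n)_{S\setminus\{s_k\}}$, invoking that shifts of Coxeter matroids are Coxeter matroids. The paper simply cites \cite[Lemma 6.12.1]{coxeter matroids} and \cite[Lemmas 6.6.1 and 6.6.2]{coxeter matroids} where you spell out the coset and rearrangement bookkeeping explicitly.
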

\begin{proof}
Let $\mathcal{B}(X)$ be a Coxeter matroid;
then $X=\{P^{(k)}(y): y \in \mathcal{B}(X)\}$ is the shift of $\mathcal{B}(X)$
to $[n]^k_<$ and so it is a matroid (see \cite[Lemma 6.12.1]{coxeter matroids}).

Conversely, $\mathcal{B}(X)$ is the shift to $\mathrm{Conf}_k([n])$ of the underlying flag matroid of $X$, so it is a Coxeter matroid (see \cite[Lemmas 6.6.1 and 6.6.2]{coxeter matroids}).
\end{proof}
\begin{ex}
An interval $[x,y] \subseteq [n]^k_<$ is a matroid
and its barycentric subdivision is the interval $[x,y_ky_{k-1}\ldots y_1] \subseteq \mathrm{Conf}_k([n])$, which is a Coxeter matroid.
In general, it is proved in \cite{capelli} that any Bruhat interval of a parabolic quotient of a finite Coxeter group is a Coxeter matroid.
\end{ex}

We now provide a notion of shellability for subsets of $\mathrm{Conf}_k([n])$, which agrees with the standard notion in case of barycentric subdivisions.

For $y\in Y\subseteq \mathrm{Conf}_k([n])$ let us define $$P(y):=\{P^{(1)}(y),\ldots,P^{(k)}(y)\}$$ and the simplicial complex $\Delta(Y)$ whose set of facets is
$\left\{P(y) : y \in Y \right\}$.

\begin{dfn} \label{def shell}
We say that a set $Y\subseteq \mathrm{Conf}_k([n])$ is \emph{flag shellable}
if $\Delta(Y)$ is shellable.
\end{dfn}

Let $Y=\{a,b,\ldots\}\subseteq\mathrm{Conf}_k([n])$. We say that $(a,b,\ldots)$ is a {\em flag shelling order} for $Y$  if $(P(a),P(b),\ldots)$ is a shelling order for  $\Delta(Y)$.

\begin{ex}
Consider the set $Y=\{132,435\} \subseteq \mathrm{Conf}_3([5])$. Then $\Delta(Y)=\{\{1,13,123\},\{4,34,345\}\}$; hence it is not flag shellable. On the other hand, $Y=\{142,143\} \subseteq \mathrm{Conf}_3([4])$ is flag shellable, because $(\{1,14,124\},\{1,14,134\})$ is a shelling order.
\end{ex}

We observe that, if $X\subseteq [n]^k_<$, then the simplicial complex $\Delta\left(\mathcal{B}(X) \right)$ is the order complex of the face poset $F_X$. Therefore, according to Definition \ref{def shell}, the barycentric subdivision $\mathcal{B}(X)$ is flag shellable if and only if the order complex of $F_X$ is shellable. The following theorem is the analogue of Corollary \ref{corollario samper} for order ideals of $\mathrm{Conf}_k([n])$.
\begin{thm} \label{teorema ideali 2}
Let $Y\subseteq \mathrm{Conf}_k([n])$ be an order ideal; then any linear extension of $Y$ is a flag shelling order.
\end{thm}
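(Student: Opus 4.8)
The plan is to reduce flag shellability of an order ideal $Y \subseteq \mathrm{Conf}_k([n])$ to Theorem \ref{estensioni} applied level-by-level to the shifts of $Y$, and then to stitch the resulting shelling data together along the flag structure. Concretely, given a linear extension $L = (L_1,\dots,L_h)$ of $Y$ in the Bruhat order of $\mathrm{Conf}_k([n])$, I want to show that $(P(L_1),\dots,P(L_h))$ is a shelling order for the order complex $\Delta(Y)$. The key observation is that $\Delta(Y)$ has facets $P(y) = \{P^{(1)}(y),\dots,P^{(k)}(y)\}$, i.e.\ each facet is a maximal chain in the face poset, and the facet $P(L_i)$ is glued onto its predecessors along a codimension-$1$ face precisely when we can find an earlier facet $P(L_z)$ agreeing with $P(L_i)$ in all but one coordinate $P^{(t)}$, while dominating the intersection $P(L_i) \cap P(L_j)$ for the fixed later index $j$. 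So the combinatorics to control is: for each level $t \in [k]$, the truncated sequence obtained by applying $P^{(t)}$ to $L_1,\dots,L_h$.

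First I would record that the shift $Y^{(t)} := \{P^{(t)}(y) : y \in Y\} \subseteq [n]^t_<$ is again an order ideal: this follows because $P^{(t)}$ is order-preserving and surjective onto its image with respect to the Bruhat orders (by the characterization of Bruhat order on $\mathrm{Conf}_k([n])$ via the $P^{(i)}$ recalled in Section \ref{preliminari}), so the preimage condition defining an order ideal is preserved downward. Hence by Corollary \ref{corollario samper} (or directly by Theorem \ref{estensioni} via \cite[Theorem 4.11]{samper}), $Y^{(t)}$ has the quasi-exchange property and every linear extension of $Y^{(t)}$ is a shelling order. Next I would check that $(P^{(t)}(L_1),\dots,P^{(t)}(L_h))$, after deleting repeats but keeping the first occurrence of each element, is a linear extension of $Y^{(t)}$: if $P^{(t)}(L_a) < P^{(t)}(L_b)$ in $[n]^t_<$ then, by comparing the chains $P(L_a)$ and $P(L_b)$ coordinatewise, one sees $L_a < L_b$ is not forced but $L_a \not> L_b$ holds, and monotonicity of $L$ gives $a < b$; so the first appearances respect the order. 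Thus for each $t$ we get a genuine shelling order on the level-$t$ shift.

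Then comes the assembly step, which I expect to be the main obstacle. Fix $i < j$ in $[h]$ and let $P(L_i) \cap P(L_j)$ be the chain $\{P^{(t)}(L_j) : t \in T\}$ for some $T \subseteq [k]$ — the set of levels where the two flags agree. Let $v := \min([k] \setminus T)$ be the lowest level of disagreement (if $[k] \setminus T = \varnothing$ then $P(L_i) = P(L_j)$, impossible for $i \ne j$). I want to produce $z < j$ with $P(L_z) \cap P(L_j) \supseteq P(L_i) \cap P(L_j)$ and $|P(L_z) \cap P(L_j)| = k - 1$, i.e.\ $P(L_z)$ agrees with $P(L_j)$ in exactly the levels $[k] \setminus \{v'\}$ for a single $v'$, with $[k] \setminus \{v'\} \supseteq T$. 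Since $P^{(v)}(L_i) \not> P^{(v)}(L_j)$, I apply the shelling property of the level-$v$ sequence to the pair $(i,j)$: there is an earlier-appearing element $w = P^{(v)}(L_z)$ of $Y^{(v)}$ with $|w \cap P^{(v)}(L_j)| = v-1$ and $P^{(v)}(L_i) \cap P^{(v)}(L_j) \subseteq w$. The delicate point is that changing the level-$v$ entry of a flag forces compatible changes at neighbouring levels, so I cannot directly conclude that the corresponding element of $Y$ has $P(L_z)$ agreeing with $P(L_j)$ at all levels except $v$; I would instead argue by choosing $z$ to be the index of the Bruhat-predecessor of $L_j$ obtained by the \emph{single} exchange at level $v$ coming from the quasi-exchange property (as in the proof of Theorem \ref{estensioni}, applied inside $Y$ rather than $Y^{(v)}$), and then verify that this predecessor differs from $L_j$ in its $P^{(\cdot)}$-profile only at levels in $[k]\setminus T$, in fact only at a single level, using that $L_j$ and $L_i$ already agree on the chain above level $v$ more than they agree below — this is precisely the induction-on-$i$ device used in case (2) of the proof of Theorem \ref{estensioni}, transported to the flag setting. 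I would carry out the same "minimal disagreement level" induction: assuming for all $j' < i$ one can correct $L_{r+1}$ toward $L_{j'}$ by a Bruhat-decreasing move landing in $Y$, and then correcting toward $L_i$ itself, producing the desired $z < j$. The rest is the routine verification that the codimension-one incidence at the flag level is exactly the single-level-$v$ discrepancy, which follows from the definition of $P(y)$ and the fact that $P^{(t)}$ for $t \ne v$ is unchanged by the chosen exchange.
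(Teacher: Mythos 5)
There is a genuine gap at the assembly step, and the level-by-level reduction does not repair it. The crux is: which elements $y\in Y$ can serve as the witness, i.e.\ satisfy $|P(y)\cap P(L_j)|=k-1$ with $P(L_i)\cap P(L_j)\subseteq P(y)\cap P(L_j)$? If the flags $P(y)$ and $P(L_j)$ agree at every level except some $v<k$, then $P^{(v)}(y)$ must sit strictly between $P^{(v-1)}(L_j)$ and $P^{(v+1)}(L_j)$, so the \emph{only} candidate other than $L_j$'s own level-$v$ set is $P^{(v-1)}(L_j)\cup\{L_j(v+1)\}$, i.e.\ the flag of $L_js_v$. For $L_js_v$ to lie in the order ideal $Y$ and to precede $L_j$ in the linear extension you need $L_js_v<L_j$, i.e.\ $v\in D_R(L_j)$. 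Your plan fixes $v$ to be the \emph{minimal} level of disagreement between $P(L_i)$ and $P(L_j)$, and that level need not be a descent: take $L_j=(2,4)$ and $L_i=(1,4)$ in $\mathrm{Conf}_2([4])$; the flags disagree already at level $1$, but $1\notin D_R(L_j)$ and $L_js_1=(4,2)>L_j$, so the move you want is Bruhat-increasing and there is no reason for it to lie in $Y$ or to occur earlier in $L$. The quasi-exchange witness $w$ you extract from the level-$v$ shift suffers from the same problem in another guise: a generic exchange $P^{(v)}(L_j)+\{a,b\}$ removes an arbitrary $a$ and adds an arbitrary $b$, whereas agreement of the flags at all other levels forces $a=L_j(v)$ and $b=L_j(v+1)$. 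You flag this yourself as ``the delicate point,'' but the promised verification that the chosen exchange ``differs \ldots only at a single level'' is exactly what fails in general, and the induction from case (2) of Theorem \ref{estensioni} does not transport to supply it.

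The missing ingredient, which the paper's proof is built on, is the existence of a \emph{descent} $j\in D_R(L_h)$ at which $P^{(j)}(L_i)\neq P^{(j)}(L_h)$: this follows from \cite[Corollary 2.6.2]{BB}, since an element of the quotient is determined by its projections at its descent positions, so agreement at all descents would force $L_i=L_h$. Once such a descent is chosen, $L_hs_j<L_h$ lies in $Y$ because $Y$ is an order ideal, hence occurs earlier in the linear extension, and its flag differs from $P(L_h)$ at level $j$ only (with a separate but easy argument when $j=k$, where one takes $P^{[n-1]\setminus[k]}(L_hs_k)$). This single observation replaces both your level-wise shelling orders and the stitching argument; neither the claim that the shifts $Y^{(t)}$ are order ideals nor Theorem \ref{estensioni} is needed.
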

\begin{proof}
Let $h:=|Y|$ and $L:=(L_1,\ldots,L_h)$ be a linear extension of $Y$. If $h=1$ the result is trivial.
Let $h \geq 2$ and assume $(L_1,\ldots,L_{h-1})$ is a flag shelling order. Let $i\in [h-1]$. We have that $L_h \neq (1,2,\ldots, k)$ and $L_i \ngeqslant L_h$, since
$L$ is a linear extension.
Notice that there exists $r \in D_R(L_h)$ such that $P^{(r)}(L_i)\neq P^{(r)}(L_h)$.
In fact, if $P^{(r)}(L_i)= P^{(r)}(L_h)$ for all $r\in D_R(L_h)$, then $L_h= L_i$, by \cite[Corollary 2.6.2]{BB}, a contradiction.
Hence let $j:=\min \{r\in D_R(L_h):P^{(r)}(L_i)\neq P^{(r)}(L_h)\}$.
If $j<k$
we have that $L_hs_j \in X$, because $L_h>L_hs_j \in \mathrm{Conf}_k([n])$ and $X$
is an order ideal, and then there exists $z\in [h-1]$ such that
$L_hs_j=L_z$. Moreover $P^{(j)}(L_h) \not \in P(L_i)$
and $|P(L_z)\cap P(L_h)|=|P(L_h)|-1$. Therefore
$(L_1,\ldots,L_h)$ is a flag shelling order for $Y$.
If $j=k$ then the result follows analogously, by considering $L_z=P^{[n-1]\setminus[k]}(L_hs_j)\in \mathrm{Conf}_k([n])$, since $P^{[n-1]\setminus[k]}$ is order preserving (see \cite[Proposition 2.5.1]{BB}) and then $L_z\leqslant L_hs_j < L_h$.
 \end{proof}
Although principal order ideals in $\mathrm{Conf}_k([n])$ are Coxeter matroids by \cite[Theorem 6.3]{capelli},
the result of Theorem \ref{teorema ideali 2} is not true for all Coxeter matroids in $\mathrm{Conf}_k([n])$, as the following example shows.
\begin{ex}
Let $Y:=\{24,42,34,43\} \subseteq \mathrm{Conf}_2([4])$. This is the barycentric subdivision of the matroid $\{24,34\} \subseteq [4]^2_<$, hence it is a Coxeter matroids by Theorem \ref{baricentro matroidi}. It is also a Bruhat interval. We have that $\Delta(Y)=\{\{2,24\},\{4,24\},\{3,34\},\{4,34\}\}$.
The linear extensions of $Y$ are $L_1:=(24,34,42,43)$ and $L_2:=(24,42,34,43)$; but  $(\{2,24\},\{3,34\},\{4,24\},\{3,34\})$ and $(\{2,24\},\{4,24\},\{3,34\},\{4,34\})$ are not shelling orders, and hence $L_1$ and $L_2$ are not flag shelling orders.
\end{ex}
In the following example we list the flag shelling orders provided by the linear extensions of an order ideal of $\mathrm{Conf}_2([4])$.
\begin{ex}
Let $Y:=\{12,13,21,23,14\} \subseteq \mathrm{Conf}_2([4])$. This is an order ideal and $\Delta(Y)=\{\{1,12\},\{1,13\},\{2,12\}, \{2,23\},\{1,14\}\}$.
The linear extensions of $Y$ are $L_1:=(12,13,21,23,14)$, $L_2:=(12,21,13,23,14)$,
$L_3:=(12,13,21,14,23)$, $L_4:=(12,21,13,14,23)$ and $L_5:=(12,13,14,21,23)$.
They correspond to the following shelling orders of $\Delta(Y)$:
\begin{enumerate}
  \item $(\{1,12\},\{1,13\},\{2,12\},\{2,23\},\{1,14\})$,
  \item $(\{1,12\},\{2,12\},\{1,13\},\{2,23\},\{1,14\})$,
  \item $(\{1,12\},\{1,13\},\{2,12\},\{1,14\},\{2,23\})$,
  \item $(\{1,12\},\{2,12\},\{1,13\},\{1,14\},\{2,23\})$,
  \item $(\{1,12\},\{1,13\},\{1,14\},\{2,12\},\{2,23\})$.
\end{enumerate}
Hence $L_1,L_2,L_3,L_4$ and $L_5$ are flag shelling orders of $Y$.
\end{ex}

\section{Promotion and evacuation of shelling orders} \label{promo}
In this section we introduce promotion and evacuation of shelling orders. Promotion and evacuation functions, $\partial_P$ and $\epsilon_P$ respectively, can be defined on the set of linear extensions of a finite poset $P$ (see \cite{stanley promotion}); we consider the generalizations $\partial_G$ and $\epsilon_G$ for a labelled graph $G$, introduced in \cite{malvenuto}. They coincide with $\partial_P$ and $\epsilon_P$ if $G$ is the Hasse diagram of $P$.

For the following construction see \cite{malvenuto}. Let $h\in \N$. Given a graph $G=(V,E)$ such that $V=[h]$, %and a bijective function $\mathcal{L}: V \rightarrow [h]$,
define the \emph{track} $T_G=\{v_1,\ldots,v_r\} \subseteq [h]$ by:
\begin{enumerate}
    \item $v_1 = 1$,
    \item for $i\geq 2$,  $v_i=\min \{j \in [h]: j> v_{i-1}, \, \{v_{i-1},j\} \in E\}$ if this minimum exists, otherwise $r=i-1$.
\end{enumerate}
The \emph{promotion} of the labelled graph $G$ is the permutation $\partial_G \in S_h$ defined by:
\begin{enumerate}
    \item $\partial_G(i)=i-1$, if $i\in [h]\setminus T_G$;
    \item $\partial_G(v_j)=v_{j+1}-1$, if $j \in [r-1]$;
    \item $\partial_G(v_r)=h$.
\end{enumerate}
In order to introduce promotion and evacuation of shelling orders we consider the so called dual graph of
 $X\subseteq [n]^k_<$ (for an overview on dual graphs see \cite{benedetti}).

\begin{dfn} \label{def dual graph}
Let $X\subseteq [n]^k_<$. The \emph{dual graph} $D(X)$ of $X$ is the graph
whose vertex set is $X$ and $\{x,y\}$ is an edge if and only if $|x\cap y|=k-1$, for all $x,y\in X$.
\end{dfn} An element $C\in \mathrm{Conf}([n]^k_<)$ uniquely determines
a simplicial complex $\{C_1,\ldots,C_h\} \subseteq [n]^k_<$, where $h:=N(C)$. The \emph{dual graph} of $C$, denoted by $D(C)$, is the graph $\left([h],E\right)$, where $\{i,j\}\in E$ if and only if $|C_i\cap C_j|=k-1$, for all $i,j\in [h]$.
Let us define a function $$\partial_D: \mathrm{Conf}([n]^k_<) \rightarrow \mathrm{Conf}([n]^k_<)$$
by setting $\partial_DC:=\partial_{D(C)}C$, where, for a permutation $\sigma \in S_h$, we let
$$\sigma C =\left(C_{\sigma^{-1}(1)},\ldots,C_{\sigma^{-1}(h)}\right).$$
Notice that $\partial_D C$ is simply obtained from $C$ by changing the positions of the elements in the track. Moreover $C\in \mathrm{Conf}_h([n]^k_<)$ implies $\partial_DC \in \mathrm{Conf}_h([n]^k_<)$, for all $h\geqslant 1$.

Similarly, we can define $$\partial_H: \mathrm{Conf}([n]^k_<) \rightarrow \mathrm{Conf}([n]^k_<),$$ by setting $\partial_HC:=\partial_{H(C)}C$, where $H(C)=([N(C)],E)$ and $\{i,j\}\in E$ if and only if $\{C_i,C_j\}$ is an edge of the Hasse diagram of the Bruhat order, for all $i,j\in [N(C)]$.

\begin{ex} \label{esempio Bjorner}
Let $k=3$ and $n=6$. Consider the so-called Bj\"{o}rner's example (see \cite[Exercise 7.7.1]{bjorner}), a $2$-dimensional shellable simplicial complex obtained by adding a suitable facet to the minimal triangulation of the real projective plane. We consider the shelling order $$C:=(123,125,126,234,235,134,136,145,246,356,456);$$ the dual graph of $C$ is depicted in Figure \ref{grafo duale}. The dual graph track is $T_{D(C)}=\{1,2,3,7,10,11\}$ and, in Figure \ref{grafo duale}, it is denoted by overlined labels.
Then $\partial_{D(C)}=(1,2,6,3,4,5,9,7,8,10,11) \in S_{11}$.
We have that $$\partial_D C=(123,125,234,235,134,126,145,246,136,356,456)$$ and it is not difficult to see that $\partial_DC$ is a shelling order.
The Hasse track of $C$ is $T_{H(C)}=\{1,2,3,7,9,10,11\}$ and then $\partial_{H(C)}=(1,2,6,3,4,5,8,7,9,10,11)\in S_{11}$. Hence
$$\partial_H C=(123,125,234,235,134,126,145,136,246,356,456).$$ The Hasse diagram of $C$ is depicted in Figure \ref{diagramma di Hasse}, where the overlined vertices correspond to the Hasse track.
Notice that $C$ is a linear extension and then $\partial_HC$ is a linear extension; it is also a shelling order.
\end{ex}
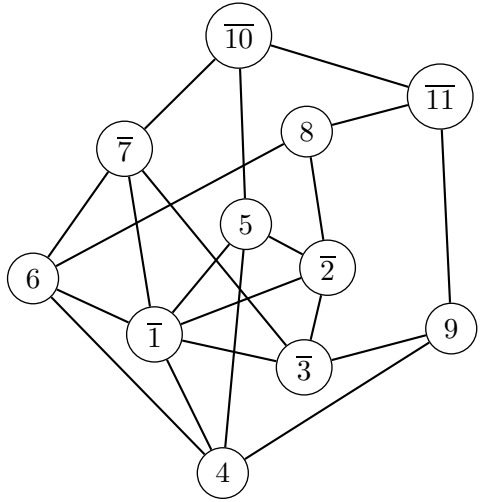
\begin{figure}
\begin{center}
\begin{tikzpicture} \label{grafo duale}
\GraphInit[vstyle=Normal]
\useasboundingbox (0,0) rectangle (5.0cm,5.0cm);
\Vertex[L=\hbox{$\overline{1}$},x=1.0965cm,y=1.0384cm]{v0}
\Vertex[L=\hbox{$\overline{2}$},x=3.3737cm,y=1.9227cm]{v1}
\Vertex[L=\hbox{$\overline{3}$},x=3.0656cm,y=0.6016cm]{v2}
\Vertex[L=\hbox{$4$},x=1.9958cm,y=-0.8cm]{v3}
\Vertex[L=\hbox{$5$},x=2.3cm,y=2.5cm]{v4}
\Vertex[L=\hbox{$6$},x=-0.5cm,y=1.7802cm]{v5}
\Vertex[L=\hbox{$\overline{7}$},x=0.7033cm,y=3.5cm]{v6}
\Vertex[L=\hbox{$8$},x=3.1cm,y=3.7279cm]{v7}
\Vertex[L=\hbox{$9$},x=5.0cm,y=1.1159cm]{v8}
\Vertex[L=\hbox{$\overline{10}$},x=2.206cm,y=5.0cm]{v9}
\Vertex[L=\hbox{$\overline{11}$},x=4.8549cm,y=4.1928cm]{v10}
\Edge[](v0)(v1)
\Edge[](v0)(v2)
\Edge[](v0)(v3)
\Edge[](v0)(v4)
\Edge[](v0)(v5)
\Edge[](v0)(v6)
\Edge[](v1)(v2)
\Edge[](v1)(v4)
\Edge[](v1)(v7)
\Edge[](v2)(v6)
\Edge[](v2)(v8)
\Edge[](v3)(v4)
\Edge[](v3)(v5)
\Edge[](v3)(v8)
\Edge[](v4)(v9)
\Edge[](v5)(v6)
\Edge[](v5)(v7)
\Edge[](v6)(v9)
\Edge[](v7)(v10)
\Edge[](v8)(v10)
\Edge[](v9)(v10)
\end{tikzpicture}

$\,$

$\,$

$\,$

\caption{Dual graph of the Bj\"{o}rner's example. The labeling is given by the shelling order $C$ of Example \ref{esempio Bjorner}.} \label{grafo duale}
\end{center} \end{figure}

\begin{figure} \begin{center}\begin{tikzpicture}
\matrix (a) [matrix of math nodes, column sep=0.6cm, row sep=0.6cm]{
    &     & \overline{11} &     & \\
    &     & \overline{10} &     & \\
    &     & \overline{9} &     & \\
    & \overline{7} & 8 & 5 & \\
\overline{3} &     &     &     &  4 \\
    & \overline{2} &     & 6 & \\
    &     & \overline{1} &     & \\};

\foreach \i/\j in {1-3/2-3, 2-3/3-3, 4-2/3-3,4-4/3-3, 4-3/3-3,
5-1/4-2,5-5/4-4,6-2/5-1,6-2/4-3,6-2/4-4,6-4/5-5,6-4/4-3,6-4/4-2,7-3/6-2,7-3/6-4}
    \draw (a-\i) -- (a-\j);
\end{tikzpicture} \caption{Hasse diagram of the Bj\"{o}rner's example.  The labeling is given by the shelling order $C$ of Example \ref{esempio Bjorner}.} \label{diagramma di Hasse} \end{center} \end{figure}
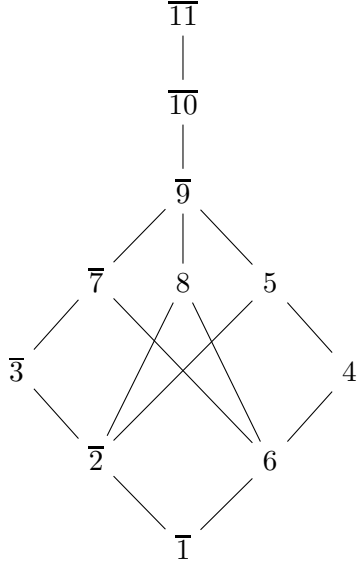
Given a graph $G=(V,E)$ such that $V=[h]$,
for $i\in [h-1]$ we define a permutation $s^G_i\in S_h$ by setting
$$s^G_i=\left\{
  \begin{array}{ll}
    s_i, & \hbox{if $\{i,i+1\}\not \in E$;} \\
    e, & \hbox{otherwise,}
  \end{array}
\right.$$ where $s_i$ is the simple transposition $12\ldots(i+1)i\ldots h$.
Then, for $C\in \mathrm{Conf}_h([n]^k_<)$ and $i\in [N(C)-1]$, we define $s_i^DC:=s_i^{D(C)}C$. By \cite[Lemma 1]{malvenuto} we have that

\begin{equation} \label{composizione}
    \partial_DC=s^D_{h-1} \cdots s^D_1C,
\end{equation}
for all
$C\in \mathrm{Conf}_h([n]^k_<)$.

The following result essentially states that, if $C$ is a shelling order, then $s_i^DC$ is a shelling order, for all $1\leqslant i \leqslant N(C)-1$.

\begin{prop}\label{lemma che sanno tutti}
Let $C \in \mathrm{Conf}_h([n]^k_<)$ be a shelling order, with $h \geqslant 3$. If $|C_{h-1} \cap C_h|<k-1$ then $(C_1,\ldots,C_h,C_{h-1})$ is a shelling order.
\end{prop}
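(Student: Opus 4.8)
The plan is to show directly that the tuple $C' := (C_1,\ldots,C_h,C_{h-1})$ satisfies the defining condition of a shelling order, using that $C$ already is one and that $|C_{h-1}\cap C_h| < k-1$. Write $a := C_{h-1}$ and $b := C_h$, and relabel the positions of $C'$ as $1,\ldots,h-1$ (occupied by $C_1,\ldots,C_{h-2},b$) followed by position $h$ occupied by $a$. The only pairs $(i,j)$ with $i<j$ for which the shelling condition is not immediately inherited from $C$ are those where $j$ is the last position (now holding $a$), so I would fix such an $i$ and produce a facet $C_z$ with $z$ preceding $a$ in $C'$, $|C_z\cap a| = k-1$, and $(C')_i \cap a \subseteq C_z \cap a$. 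Here $(C')_i$ is one of $C_1,\ldots,C_{h-2}$ or is $b$.

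First I would handle the case $(C')_i = b$, i.e. the pair $(h-1,h)$ in $C'$. Because $C$ is a shelling order, applying its defining property to the pair $(h-1,h)$ of $C$ gives a $z < h$ with $|C_z\cap b| = k-1$ and $a\cap b \subseteq C_z\cap b$; but I actually need a restrictor for $a$, not for $b$. Instead I would apply the shelling property of $C$ to the pair $(z', h-1)$ where... — more cleanly: apply the property of $C$ to the pair consisting of position $h-1$ (holding $a$) against itself is vacuous, so the right move is to use that $a = C_{h-1}$ appears in the shelling order $C$ and invoke the property for pairs $(i, h-1)$ and $(i,h)$ simultaneously. Concretely: by the shelling property of $C$ applied with $j = h-1$, for every $i \le h-2$ there is $z<h-1$ with $|C_z\cap a| = k-1$ and $C_i\cap a\subseteq C_z\cap a$; this already disposes of all pairs $(i,h)$ in $C'$ with $(C')_i = C_i$, $i\le h-2$, since such $z$ still precedes $a$ in $C'$. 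The remaining pair is $(b, a)$, i.e. we need $z$ preceding $a$ in $C'$ with $|C_z\cap a| = k-1$ and $b\cap a \subseteq C_z\cap a$. Since $|a\cap b| < k-1$, i.e. $|a\cap b| \le k-2$, pick any element $c \in a\setminus b$ with $c \ne \min(a\setminus b)$ is not quite what is needed; rather I would argue that we may choose $z$ as follows: apply the shelling property of $C$ to the pair $(h,h-1)$? That is backwards. The cleanest route: because $h\geq 3$ there is some position $<h-1$, and we use that $C_{h-1}=a$ was itself added in a shelling way, so there exists $z_0 < h-1$ with $|C_{z_0}\cap a| = k-1$; then I would upgrade $z_0$ to also contain $a\cap b$ by iterating the shelling property, using $|a\cap b|<k-1$ to guarantee that the symmetric-difference step does not have to disturb the intersection with $b$.

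The second case, $(C')_i = C_i$ with $i \le h-2$, is the one I expect to be routine once the first case is set up: the shelling property of $C$ with $j := h-1$ gives the desired $z < h-1 < $ (position of $a$ in $C'$), and since $z \le h-2$ the facet $C_z$ still occupies the same position in $C'$ as in $C$, so the condition transfers verbatim. Pairs $(i,j)$ with $j \le h-2$ are literally unchanged between $C$ and $C'$, so nothing to check there, and pairs with $j = h-1$ in $C'$ (holding $b = C_h$) are exactly pairs $(i,h)$ of $C$ with the restrictor possibly at old position $h-1$; but old position $h-1$ holds $a$, which in $C'$ now sits at position $h > h-1$, so I must check that the restrictor $z$ furnished by $C$ for the pair $(i,h)$ is never $z = h-1$. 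This is where $|C_{h-1}\cap C_h| < k-1$ is used again: $a = C_{h-1}$ cannot be a codimension-one restrictor of $b = C_h$ precisely because $|a\cap b| \ne k-1$, so $z\le h-2$ automatically, and the condition transfers.

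The main obstacle, then, is the $(b,a)$ pair in the first case: producing a codimension-one face $C_z$ of $a$, occurring before $a$ in $C'$, that contains $a\cap b$. I would resolve it by combining two facts — that $C$ is a shelling order (so $a=C_{h-1}$ has a codimension-one restrictor among $C_1,\ldots,C_{h-2}$) and that $|a\cap b| \le k-2$ (so the single "bad" element of $a$ omitted by a codimension-one restrictor can be chosen outside $a\cap b$, possibly after finitely many applications of the shelling property to walk the restrictor toward one containing $a\cap b$). I would write this walk explicitly: starting from any $z_0<h-1$ with $|C_{z_0}\cap a|=k-1$, if $a\cap b\subseteq C_{z_0}$ we are done; otherwise the unique element $e\in a\setminus C_{z_0}$ lies in $a\cap b$, and applying the shelling property of $C$ to the pair $(z_0, h-1)$ with the larger set $\{e\}\cup(a\cap b)$ as the subset to be covered produces a new restrictor closer to containing $a\cap b$; termination follows since $|a\cap b|$ is finite and at most $k-2 < k-1$, so there is always "room" in a codimension-one face of $a$ for all of $a\cap b$.
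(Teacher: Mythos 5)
Your reduction to the single problematic pair is the same as the paper's: pairs with both entries among $C_1,\ldots,C_{h-2}$ are untouched; for a pair $(C_i,C_h)$ the hypothesis $|C_{h-1}\cap C_h|<k-1$ rules out $z=h-1$ as the codimension-one restrictor, so the old restrictor still precedes $C_h$ after the swap; and for $(C_i,C_{h-1})$ with $i\leqslant h-2$ the old restrictor $z<h-1$ still works. All of that is fine. The gap is exactly where you locate the main obstacle, the pair $(C_h,C_{h-1})$, and your proposed ``walk'' does not close it. First, the shelling property cannot be applied ``with the larger set $\{e\}\cup(a\cap b)$ as the subset to be covered'': the definition only guarantees covering of intersections $C_i\cap C_j$ with \emph{actual earlier facets}, not of arbitrary faces you designate. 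Second, and more fatally, if $z_0<h-1$ already satisfies $|C_{z_0}\cap a|=k-1$, then applying the shelling property to the pair $(z_0,h-1)$ produces $z_1<h-1$ with $C_{z_0}\cap a\subseteq C_{z_1}\cap a$ and $|C_{z_1}\cap a|=k-1$, which forces $C_{z_1}\cap a=C_{z_0}\cap a$; the offending element $e\in a\cap b$ is still missing, so the walk never advances. The closing remark that ``there is always room'' because $|a\cap b|\leqslant k-2$ is a heuristic, not an argument.

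The missing idea, which is the paper's, is to route through $C_h$ rather than iterate on $C_{h-1}$. Apply the shelling property of $C$ to the pair $(C_{h-1},C_h)$: this gives $j<h$ with $|C_j\cap C_h|=k-1$ and $C_{h-1}\cap C_h\subseteq C_j$, and the hypothesis $|C_{h-1}\cap C_h|<k-1$ forces $j\neq h-1$, hence $j<h-1$. Now apply the shelling property to the pair $(C_j,C_{h-1})$: this gives $r<h-1$ with $C_r\cap C_{h-1}=C_{h-1}\setminus\{c\}$ for some $c\in C_{h-1}\setminus C_j$ and $C_j\cap C_{h-1}\subseteq C_r\cap C_{h-1}$. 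Since $C_{h-1}\cap C_h\subseteq C_j$ and $c\notin C_j$, the vertex $c$ lies outside $C_h$, so $C_h\cap C_{h-1}\subseteq C_{h-1}\setminus\{c\}=C_r\cap C_{h-1}$, and $C_r$ with $r<h-1$ is the required restrictor for the last pair. This single two-step application replaces your iterative walk and is where the hypothesis does its real work.
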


\begin{proof}
Consider $i<h-1$. For the pair $(C_i,C_{h-1})$ we have nothing to show. For the pair $(C_i,C_h)$, there exists $x \in C_h \setminus C_i$ and $j<h$ such that $C_j=C_h +\{x,y\}$, for some $y\in [n]$. By our assumption, $j \neq h-1$ and the shellability condition on this pair follows.

It remains to verify the shellability condition for $(C_h,C_{h-1})$. By the fact that $C$ is a shelling order and by our assumption, there exists $z \in C_h \setminus C_{h-1}$ and $j<h-1$ such that $C_j=C_h+\{z,y\}$, for some $y\in [n]$. Since $C$ is a shelling order, there exists $c \in C_{h-1} \setminus C_j$ and $r<h-1$ such that $C_r=C_{h-1}+\{c,v\}$, for some $v$. Since $c \notin C_j=C_h+\{z,y\}$ and $c \neq z$, hence $c \in C_{h-1} \setminus C_h$ and $C_r=C_{h-1}+\{c,v\}$, with $r<h-1$, and this concludes the proof.
\end{proof}
The statement of the following theorem is the main result of this section.
\begin{thm} \label{teorema shelling}
Let $C \in \mathrm{Conf}([n]^k_<)$ be a shelling order. Then the promotion $\partial_D C$ is a shelling order.
\end{thm}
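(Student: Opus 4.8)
The plan is to derive Theorem~\ref{teorema shelling} from the factorization \eqref{composizione} together with Proposition~\ref{lemma che sanno tutti}. Write $h:=N(C)$. If $h\leqslant 2$ the statement is trivial (the promotion is essentially the identity up to a swap), so assume $h\geqslant 3$. By \eqref{composizione} we have $\partial_D C = s^D_{h-1}\cdots s^D_1 C$, so it suffices to prove that each partial product $C^{(i)}:=s^D_i C^{(i-1)}$ (with $C^{(0)}:=C$) is again a shelling order; then an easy induction finishes the proof. Thus the whole theorem reduces to the single elementary move: \emph{if $C$ is a shelling order and $1\leqslant i\leqslant h-1$, then $s_i^D C$ is a shelling order}.

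First I would dispose of the case $\{i,i+1\}\in D(C)$: here $s_i^{D(C)}=e$, so $s_i^D C = C$ and there is nothing to prove. So assume $\{i,i+1\}\notin D(C)$, i.e.\ $|C_i\cap C_{i+1}|<k-1$, and $s_i^D C=(C_1,\ldots,C_{i-1},C_{i+1},C_i,C_{i+2},\ldots,C_h)$ — the tuple obtained by swapping the adjacent entries in positions $i$ and $i+1$. The key observation is that Proposition~\ref{lemma che sanno tutti} is exactly this statement in the special case $i=h-1$: it says that a shelling order whose last two facets are ``far apart'' stays a shelling order after swapping them. To reduce the general adjacent swap to that case, I would note that the truncation $(C_1,\ldots,C_{i+1})$ is itself a shelling order (an initial segment of a shelling order is a shelling order, which is immediate from the definition), its last two facets satisfy $|C_i\cap C_{i+1}|<k-1$, and $i+1\geqslant 3$ when $i\geqslant 2$; hence by Proposition~\ref{lemma che sanno tutti} the tuple $(C_1,\ldots,C_{i-1},C_{i+1},C_i)$ is a shelling order. (When $i=1$ one checks directly that swapping the first two facets of a shelling order, when they are far apart, yields a shelling order, since $C_1,C_2$ can be taken as the first two facets of any shelling and the condition for the pair $(C_2,C_1)$ is vacuous at this stage.)

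It then remains to re-attach the tail $C_{i+2},\ldots,C_h$ and check the shelling condition for every pair $(C_a,C_b)$ with $a<b$ in the new order. The pairs entirely inside $\{1,\ldots,i+1\}$ are handled by the previous paragraph, and the pairs entirely inside $\{i+2,\ldots,h\}$ are unchanged from $C$. For a pair with $a\leqslant i+1<i+2\leqslant b$: the required ``bridging'' facet $C_z$ (with $|C_z\cap C_b|=k-1$, $C_a\cap C_b\subseteq C_z\cap C_b$, $z<b$) provided by the fact that $C$ is a shelling order lies at some position $z<b$; since the swap only permutes positions $i$ and $i+1$ among themselves, $C_z$ still occurs before $C_b$ in the new order, and $C_a\cap C_b$ is unchanged since $\{C_a,C_b\}=\{C_{a'},C_{b'}\}$ as sets. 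The only subtlety is when $z\in\{i,i+1\}$ and the witness needs $z$ strictly before position $b$: this still holds because positions $i,i+1$ are both $<i+2\leqslant b$. Hence $s_i^D C$ is a shelling order, completing the induction and the proof.

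The main obstacle I anticipate is bookkeeping rather than a genuine mathematical difficulty: one must be careful that the witnessing facet $C_z$ demanded by the shelling condition for a cross pair still sits at an admissible position after the swap, and that Proposition~\ref{lemma che sanno tutti} really does cover the swap of positions $(i,i+1)$ inside the initial segment and not merely the very last pair. The cleanest way to organize this is to phrase everything as ``an initial segment of a shelling order is a shelling order'' plus ``Proposition~\ref{lemma che sanno tutti} applied to that initial segment'' plus ``cross pairs and tail pairs are untouched'', and let the factorization \eqref{composizione} do the rest; I do not expect any new idea beyond Proposition~\ref{lemma che sanno tutti} to be needed.
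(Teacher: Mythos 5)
Your proposal is correct and follows exactly the paper's route: the paper's proof of Theorem~\ref{teorema shelling} is literally ``a direct consequence of \eqref{composizione} and Proposition~\ref{lemma che sanno tutti}'', and you have supplied the bookkeeping (initial segments of shelling orders are shelling orders, cross pairs and tail pairs are unaffected by an adjacent swap) that the paper leaves implicit. One minor remark: your parenthetical justification of the case $i=1$ is off --- the shelling condition for the pair in positions $(1,2)$ is not vacuous, it forces $|C_1\cap C_2|=k-1$ --- but this is harmless, since that very fact shows $\{1,2\}$ is always an edge of $D(C)$ for a shelling order, hence $s_1^D C=C$ and the case you were worried about never arises.
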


\begin{proof}
The result is a direct consequence of \eqref{composizione} and Proposition \ref{lemma che sanno tutti}.
\end{proof}

In the following example we show that Theorem \ref{teorema shelling} does not hold for $\partial_H$.

\begin{ex}
Let $C:=(235,234,246) \in \mathrm{Conf}([6]^3_<)$; then $C$ is a shelling order and $\partial_DC=C$; on the other hand, $\partial_HC=(235,246,234)$ is not a shelling order.
\end{ex}

\begin{oss} \label{oss lanini}
    Let $X \subseteq [n]^k_<$ be a pure simplicial complex. Notice that $\{x,y\}$ is an edge of $D(X)$ if and only if there exists a reflection $t \in S_n$ such that $x=P^{(k)}(ty)$, as elements of $S_n$,
    i.e. $D(X)$ is the undirected Bruhat graph of $X$ (for a definition of the Bruhat graph in the parabolic setting see e.g. \cite[Definition 2.5]{lanini}).
    Hence, if $\{x,y\}$ is an edge of $D(X)$, the elements $x$ and $y$ are comparable in the Bruhat order.
\end{oss}

In the next result, we prove that if a linear extension $L$ of $X\subseteq [n]^k_<$ is a shelling order, promotion of $L$ viewed as a linear extension and promotion of $L$ viewed as a shelling order coincide, under a suitable assumption.

\begin{prop} \label{proposizione hasse}
Let $L\in \mathrm{Conf}([n]^k_<)$ be a linear extension. Assume that the Hasse diagram of $L$ is a subgraph of the dual graph of $L$. Then $\partial_D L=\partial_HL$.
\end{prop}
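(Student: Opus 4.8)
The plan is to show that both promotion operators apply the same reordering permutation to $L$. Recall that promotion of a labelled graph $G$ on vertex set $[h]$ is completely determined by its \emph{track} $T_G$, via the explicit formula for $\partial_G$. Since $\partial_D L = \partial_{D(L)} L$ and $\partial_H L = \partial_{H(L)} L$, it suffices to prove that the two graphs $D(L)$ and $H(L)$ produce the same track, i.e. $T_{D(L)} = T_{H(L)}$; then $\partial_{D(L)} = \partial_{H(L)}$ as permutations in $S_h$, and hence $\partial_D L = \partial_H L$.

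First I would record the inclusion of edge sets: by hypothesis the Hasse diagram of $L$ is a subgraph of the dual graph of $L$, so every edge of $H(L)$ is an edge of $D(L)$. Next I would show the reverse containment holds for the specific edges that matter, namely edges $\{i,i+1\}$ between consecutive labels; this is where the structure of a linear extension enters. Suppose $\{i,i+1\}$ is an edge of $D(L)$, so $|L_i \cap L_{i+1}| = k-1$. By Remark~\ref{oss lanini}, $L_i$ and $L_{i+1}$ are comparable in the Bruhat order, and since $L$ is a linear extension with $i < i+1$ we must have $L_i < L_{i+1}$ (it cannot be that $L_{i+1} < L_i$). Now I claim $\{L_i, L_{i+1}\}$ is in fact a Hasse edge: if there were $w$ with $L_i < w < L_{i+1}$, then $w$ appears in the list $L$ at some position strictly between $i$ and $i+1$ (again because $L$ is a linear extension), which is impossible since there is no integer strictly between $i$ and $i+1$. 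Hence $\{i,i+1\}$ is an edge of $H(L)$ as well.

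The track construction only ever queries whether $\{v_{j-1}, j\}$ is an edge for various $j > v_{j-1}$; crucially, the recursion picks the \emph{minimum} such $j$. I would argue that at each stage the two graphs agree on the relevant minimum. Since $H(L) \subseteq D(L)$, the minimal $D(L)$-neighbour of $v_{j-1}$ that exceeds $v_{j-1}$ is at most the minimal such $H(L)$-neighbour. For the reverse inequality one uses the claim above together with the key observation that in a dual graph of a linear extension, if $v_{j-1}$ has a neighbour in $D(L)$ then it has the neighbour $v_{j-1}+1$: more precisely, I would show that the minimal $D(L)$-neighbour of $v_{j-1}$ exceeding $v_{j-1}$, call it $m$, actually equals $v_{j-1}+1$, because $L_{v_{j-1}} < L_m$ forces $L_{v_{j-1}} \lessdot L_m$ to be a cover (else an intermediate element would occupy a position strictly between, contradicting linear extension and the structure of the track), hence $m = v_{j-1}+1$ sits in both graphs. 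Thus the recursion produces identical sequences $v_1 = 1, v_2, \ldots, v_r$ in both graphs, giving $T_{D(L)} = T_{H(L)}$ and therefore $\partial_D L = \partial_H L$.

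I expect the main obstacle to be the middle step: carefully justifying that a $D(L)$-edge between non-consecutive labels never lets the track "jump ahead" of where the Hasse track would go — that is, verifying that the first neighbour encountered is always the consecutive one $v_{j-1}+1$. This hinges on combining the comparability from Remark~\ref{oss lanini} with the linear-extension property to rule out intermediate elements, and on checking that the subgraph hypothesis is exactly what is needed to close the argument in both directions.
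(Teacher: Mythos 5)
Your overall strategy is the right one and matches the paper's: prove $T_{D(L)}=T_{H(L)}$ by walking along the track, using the subgraph hypothesis for one inequality and a ``non-cover produces an intermediate Hasse edge at an intermediate position'' argument for the other. But the step you single out as the key observation is false. It is not true that the minimal $D(L)$-neighbour $m$ of $v_{j-1}$ exceeding $v_{j-1}$ equals $v_{j-1}+1$, even under the hypothesis $H(L)\subseteq D(L)$. Take $X=\{12,14,23,34\}\subseteq [4]^2_<$ with the linear extension $L=(12,14,23,34)$; here every Hasse edge is a dual-graph edge, yet at $v_2=2$ the sets $L_2=14$ and $L_3=23$ are disjoint, so the minimal $D(L)$-neighbour of $2$ exceeding $2$ is $4$, not $3$. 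Your inference ``$L_{v_{j-1}}\lessdot L_m$ is a cover, hence $m=v_{j-1}+1$'' is a non sequitur: a cover relation in the induced poset says nothing about the positions being consecutive. Relatedly, your second paragraph (consecutive-label $D$-edges are Hasse edges) is correct but irrelevant, since the track recursion does not only query consecutive labels.

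The argument can be repaired, and the repair is essentially hiding in your parenthetical. Suppose $i_a=j_a$ and let $m_D$, $m_H$ be the minimal $D$- and $H$-neighbours of $i_a$ exceeding $i_a$; the hypothesis gives $m_D\leqslant m_H$. If $m_D<m_H$, then $\{i_a,m_D\}$ is a $D$-edge but not an $H$-edge; by Remark \ref{oss lanini} and the linear-extension property, $L_{i_a}<L_{m_D}$ and this is not a cover in $X$, so there is $z$ with $L_{i_a}\vartriangleleft L_z<L_{m_D}$, whence $i_a<z<m_D$. Then $\{i_a,z\}$ is an $H$-edge, hence a $D$-edge, and the contradiction is with the \emph{minimality of $m_D$} --- not with $m_D$ failing to equal $i_a+1$. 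This is exactly how the paper closes the argument; as written, your proof leans on the false claim and therefore has a genuine gap.
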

\begin{proof} Recall that the promotion of $L$ as a linear extension is the linear extension $\partial_HL$.
By our assumption, if $L_i \triangleleft L_j$ then $\{i,j\}$ is an edge of $D(L)$, for all $i,j \in [N(L)]$. We are going to prove that the dual graph track $T_{D(L)}=\{i_1,\ldots,i_r\}$ is equal to the Hasse track $T_{H(L)}=\{j_1,\ldots,j_s\}$.

If $r=1$, then $T_{D(L)}=\{L_1\}=T_{H(L)}$, because $H(L)$ is a subgraph of $D(L)$.
Hence we may assume $r>1$. Suppose that $i_a=j_a$, for some $a \leqslant r-1$. Hence $i_{a+1} \leqslant j_{a+1}$, because $H(L)$ is a subgraph of $D(L)$.
Assume $i_{a+1} <j_{a+1}$. Since $\{i_a,i_{a+1}\}$ is an edge of $D(L)$ and $L$ is a linear extension,  $L_{i_a}<L_{i_{a+1}}$. From the fact that $\{i_a,i_{a+1}\}$ is not an edge of $H(L)$ (i.e. $L_{i_a}< L_{i_{a+1}}$ is not a covering relation), there exists $z \in [h]$ such that $L_{i_{a}}\vartriangleleft L_z<L_{i_{a+1}}$. Since $L$ is a linear extension, $z<i_{a+1}$. But this is a contradiction, because in this way $\{i_a,z\}$ is an edge of $D(L)$, against the fact that $i_{a+1} \in T_{D(L)}$. Therefore $i_{a+1}=j_{a+1}$.
Starting with $a=1$ and proceeding inductively, we proved that $i_a=j_a$ for every $a \in [r]$, i.e. the first elements of the Hasse track $T_{H(L)}$ are the elements of the dual track $T_{D(L)}$. Since $H(L)$ is a subgraph of $D(L)$, $r=s$ and $T_{D(L)}=T_{H(L)}$.
\end{proof}

For order ideals or intervals of $[n]^k_<$, the assumption of Proposition \ref{proposizione hasse} is fulfilled.
\begin{cor} \label{corollario promozioni uguali}
Let $X\subseteq [n]^k_<$ be an order ideal or an interval. If $L$ is a linear extension of $X$ then $\partial_D L=\partial_H L$.
\end{cor}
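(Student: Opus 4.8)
The plan is to reduce everything to Proposition~\ref{proposizione hasse}: that result already gives $\partial_D L = \partial_H L$ as soon as the Hasse diagram of $L$ is a subgraph of the dual graph $D(L)$, so it suffices to verify this containment whenever $L$ is a linear extension of an order ideal or an interval $X\subseteq [n]^k_<$. Unwinding the definitions, the containment asserts exactly that every covering relation $x\vartriangleleft y$ of the induced Bruhat order on $X$ satisfies $|x\cap y|=k-1$, i.e.\ that a Bruhat cover inside $X$ changes precisely one element of the associated $k$-subset of $[n]$.

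First I would eliminate $X$ from the picture. If $X$ is an order ideal or an interval, then a covering relation $x\vartriangleleft y$ of the induced order on $X$ is automatically a covering relation of the ambient poset $[n]^k_<$: if some $z\in [n]^k_<$ satisfied $x<z<y$, then from $z\leq y$ (and $x\leq z$) we would get $z\in X$, because $X$ is downward closed, respectively because $X=[a,b]$ forces $a\leq x\leq z\leq y\leq b$; this contradicts $x\vartriangleleft y$ in $X$. Hence it is enough to prove: $x\vartriangleleft y$ in $[n]^k_<$ implies $|x\cap y|=k-1$. For this I would argue directly. Write $x=\{x_1<\dots<x_k\}$ and $y=\{y_1<\dots<y_k\}$ with $x<y$, let $i$ be the least index with $x_i\neq y_i$ (so $x_i<y_i$), and let $x_i,x_i+1,\dots,x_j$ be the maximal run of consecutive integers of $x$ starting at $x_i$, so that $x_j+1\notin x$. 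Since $y_i,\dots,y_j$ are strictly increasing integers, $y_j\geq y_i+(j-i)\geq x_j+1$; in particular $x_j+1\leq n$, so $z:=(x\setminus\{x_j\})\cup\{x_j+1\}$ is a genuine element of $[n]^k_<$ differing from $x$ only in its $j$-th entry. Checking $z_\ell\leq y_\ell$ for all $\ell$ then yields $x<z\leq y$, and since $y$ covers $x$ we must have $z=y$; hence $|x\cap y|=|x\cap z|=k-1$. Combined with the reduction above, this shows the Hasse diagram of $L$ is a subgraph of $D(L)$, and Proposition~\ref{proposizione hasse} gives $\partial_D L=\partial_H L$.

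The step I expect to be the main obstacle is the last one, namely showing that a Bruhat cover in $[n]^k_<$ modifies exactly one element of the underlying subset. The subtlety is twofold: one must increment the \emph{top} entry $x_j$ of a maximal run of consecutive values rather than $x_i$ itself (replacing $x_i$ by $x_i+1$ need not give a subset), and one must certify that the new set still lies in $[n]^k_<$, which is precisely what the inequality $y_j\geq x_j+1$ guarantees (in particular it rules out the run reaching $n$). Alternatively this fact could be extracted from the known description of the rank function of the Gale order together with the classification of its covering relations in the Coxeter-matroid literature, but the direct argument above keeps the proof self-contained.
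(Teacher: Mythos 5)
Your proof is correct and follows essentially the same route as the paper: both arguments reduce to Proposition~\ref{proposizione hasse} by verifying that the Hasse diagram of $L$ is a subgraph of the dual graph $D(L)$. The only difference lies in how that containment is justified: the paper invokes the reflection characterization of Bruhat covering relations (\cite[Theorem 2.5.5]{BB} together with Remark~\ref{oss lanini}), whereas you prove directly --- and correctly --- that a cover in $[n]^k_<$ replaces the top element $x_j$ of a maximal run of consecutive entries by $x_j+1$ and hence changes exactly one element of the $k$-subset, and you also spell out the step, left implicit in the paper, that a covering relation in the induced order on an order ideal or an interval is a covering relation in the ambient poset.
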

\begin{proof}
If $X\subseteq [n]^k_<$ is an order ideal or an interval then the Hasse diagram $X$
is a subgraph of the dual graph of $X$. In fact, as elements of $S_n$, $x \vartriangleleft y$ in $X$ if and only if $x=ty$, for some reflection $t\in S_n$  (see \cite[Theorem 2.5.5]{BB}).  Then the result follows by Proposition \ref{proposizione hasse}.
\end{proof}
\begin{oss} Any Bruhat interval $I$ in $[n]^k_<$ is a matroid.
Then by Theorem \ref{estensioni} a linear extension of $I$ is a shelling order. By Corollary \ref{corollario promozioni uguali} the promotion of a linear extension $L$ of $I$ is equal to the promotion of $L$ as shelling order.
\end{oss}
In the following example we show that Proposition \ref{proposizione hasse} does not hold if $H(L)$ is not a subgraph of $D(L)$. Moreover, it shows that this assumption does not hold in general for matroids.

\begin{ex}
Consider the linear extension $L:=(123,124,135,145)$. This is a linear extension of a matroid which is not a Bruhat interval. We have that $\partial_H L=L$ but $\partial_DL=(123,135,124,145)$. Hence $\partial_D L \neq \partial_H L$.
\end{ex}

We end the article by introducing the evacuation function with respect to the dual graph. Let $h\geqslant 1$ and $r\in [h]$; the
$r$-promotion $\partial_{r,D} : \mathrm{Conf}_h([n]^k_<)\rightarrow \mathrm{Conf}_h([n]^k_<)$ is defined as follows:
$$\partial_{r,D}C=\partial_D(C_1\ldots C_r)C_{r+1}\ldots C_h,$$ for all $C\in \mathrm{Conf}_h([n]^k_<)$.
The \emph{evacuation} $\epsilon_D : \mathrm{Conf}([n]^k_<)\rightarrow \mathrm{Conf}([n]^k_<)$ is the function defined by setting
$$\epsilon_DC = \left(\partial_{2,D} \circ \ldots \circ \partial_{h-1,D} \circ \partial_{h,D}\right)(C),$$ for all $C\in \mathrm{Conf}_h([n]^k_<)$, $h\geqslant 1$.
The function $\epsilon_D$ is an involution, as stated in \cite[Theorem 1]{malvenuto}.
The last theorem follows directly from Theorem \ref{teorema shelling} and the definition of $\epsilon_D$.
\begin{thm} \label{teorema evacuazione}
Let $C \in \mathrm{Conf}([n]^k_<)$ be a shelling order. Then the evacuation $\epsilon_D C$ is a shelling order.
\end{thm}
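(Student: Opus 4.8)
The plan is to show that evacuation preserves shellability by iterating the fact, already established in Theorem \ref{teorema shelling}, that the (full) promotion $\partial_D$ sends shelling orders to shelling orders. The key observation is that $\epsilon_D$ is built as a composition of the partial promotions $\partial_{r,D}$, and each $\partial_{r,D}$ acts as $\partial_D$ only on the initial segment $(C_1,\ldots,C_r)$, leaving the tail $(C_{r+1},\ldots,C_h)$ untouched. So I would first record the elementary lemma that an initial segment of a shelling order is again a shelling order: if $C=(C_1,\ldots,C_h)$ is a shelling order and $r\leqslant h$, then $(C_1,\ldots,C_r)$ is a shelling order. This is immediate from the definition of shelling order, since the witnessing facet $C_z$ for a pair $(C_i,C_j)$ with $i<j\leqslant r$ satisfies $z<j\leqslant r$, so it already lies in the truncated tuple.

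Next I would prove the one-step claim: if $C=(C_1,\ldots,C_h)$ is a shelling order and $r\in[h]$, then $\partial_{r,D}C$ is a shelling order. Write $\partial_{r,D}C=(D_1,\ldots,D_r,C_{r+1},\ldots,C_h)$ where $(D_1,\ldots,D_r)=\partial_D(C_1,\ldots,C_r)$. By the initial-segment lemma, $(C_1,\ldots,C_r)$ is a shelling order, so by Theorem \ref{teorema shelling}, $(D_1,\ldots,D_r)$ is a shelling order; moreover it is a rearrangement of $\{C_1,\ldots,C_r\}$, since $\partial_D$ only permutes the entries along the dual-graph track. It then remains to check the shellability condition for pairs $(D_i,C_j)$ with $i\leqslant r<j$. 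Here $C_j$ is a facet appearing after position $r$ in the original order $C$, and the witness $C_z$ for the pair $(C_i,C_j)$ (any $C_i$ with $i\le r$) given by shellability of $C$ satisfies $z<j$; if $z\le r$ then $C_z$ appears among the $D$'s and its position in the new order is still $<j$ (all of $D_1,\ldots,D_r$ precede $C_j$), while if $r<z<j$ it keeps its original position. Since the relevant set-theoretic conditions $|C_z\cap C_j|=|C_j|-1$ and $C_i\cap C_j\subseteq C_z\cap C_j$ depend only on the facets and not on their positions, and since $\{C_1,\ldots,C_r\}=\{D_1,\ldots,D_r\}$ as sets, the condition is satisfied in $\partial_{r,D}C$ as well. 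For pairs $(C_i,C_j)$ with $r<i<j$ nothing changes. Hence $\partial_{r,D}C$ is a shelling order.

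Finally, by definition $\epsilon_DC=(\partial_{2,D}\circ\cdots\circ\partial_{h-1,D}\circ\partial_{h,D})(C)$, a composition of maps each of which, by the one-step claim, sends a shelling order to a shelling order (noting $\partial_{h,D}=\partial_D$ and that each $\partial_{r,D}$ preserves membership in $\mathrm{Conf}_h([n]^k_<)$). Applying them in turn starting from the shelling order $C$, we conclude that $\epsilon_DC$ is a shelling order. I expect the only delicate point to be the bookkeeping in the one-step claim — specifically, verifying that the witness $C_z$ for a mixed pair $(D_i,C_j)$ still occurs before $C_j$ after the rearrangement — but this follows cleanly once one notes that $\partial_D$ permutes positions only within $\{1,\ldots,r\}$, all of which precede position $j$ (now shifted but still $\le r<j$ in the new tuple). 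No genuinely new idea beyond Theorem \ref{teorema shelling} and the initial-segment observation is needed.
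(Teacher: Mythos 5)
Your argument is correct and is essentially the paper's proof, which simply asserts that the theorem "follows directly from Theorem \ref{teorema shelling} and the definition of $\epsilon_D$"; you have merely made explicit the details (initial segments of shelling orders are shelling orders, and the positional bookkeeping showing each partial promotion $\partial_{r,D}$ preserves shellability) that the paper leaves to the reader. No discrepancy to report.
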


\section{Acknowledgements}
The second author is grateful to Dipartimento di Ingegneria Industriale e Scienze Matematiche of Università Politecnica delle Marche, for its hospitality and financial support on October 2022. He is also grateful to the town of Castelfidardo for its accordions.

\end{document}